\title{Remarks on compactifications of pseudofinite groups}
\date{\today}
\author{Anand Pillay\thanks{Partially supported  by NSF }\\University of Notre Dame }
\newtheorem{Theorem}{Theorem}[section]
\newtheorem{Proposition}[Theorem]{Proposition}
\newtheorem{Definition}[Theorem]{Definition} 
\newtheorem{Remark}[Theorem]{Remark}
\newtheorem{Lemma}[Theorem]{Lemma}
\newtheorem{Question}[Theorem]{Question}
\newtheorem{Conjecture}[Theorem]{Conjecture}
\newcommand{\R}{\mathbb R}
\newcommand{\N}{\mathbb N}
\begin{document}
\maketitle

\begin{abstract} 
We discuss the Bohr compactification of a pseudofinite group, motivated  by a question of Boris Zilber.  Basically referring to results in the literature we point out that  (i)  the Bohr compactification of an ultraproduct of finite simple groups is trivial, and (ii) the ``definable" Bohr compactification of any pseudofinite group $G$, relative to an ambient nonstandard model of set theory in which it is definable,  is commutative-by-profinite.

\end{abstract}

\section{Introduction}

By a pseudofinite group $G$ we mean a model of the theory of finite groups, in the group language.  
An example of a pseudofinite group is an ultraproduct of a family $G_{i}$ for  $i\in \N$, of finite groups, and every pseudofinite group is elementarily equivalent to such an ultraproduct. By a compact simple Lie group we mean a compact Lie group of positive dimension which is noncommutative and has no proper nontrivial  normal closed subgroups, other than possibly coming from a finite centre. 
In \cite{Zilber} (Section 5.3, Problem 2)  Zilber asks the following question, motivated apparently by physics:

\begin{Question}  Is there an ultraproduct $G$ of a family of finite groups $(G_{i})_{i\in \N}$, and a surjective homomorphism from $G$ to a compact simple Lie group? 
\end{Question}  

It is natural to ask the slightly ``weaker" question:

\begin{Question}
 Is there a {\em pseudofinite group}  $G$ and a surjective homomorphism from $G$ to a compact simple Lie group?
\end{Question}

In Section 3 of \cite{Zilber},  Zilber introduces a formalism of ``structural approximation" and Question 1.1 is, assuming CH,  supposed to be the  same as his question of whether a compact simple Lie group can by ``structurally approximated"  by a sequence $(G_{i}$, $i<\omega)$ of finite groups.  We will not really engage with Zilber's notion of structural approximation, but instead use the usual formalism of group  compactifications of (discrete) groups (see \cite{Auslander}, Chapter 2)  as well as its model-theoretic treatment in \cite{GPP}.   

\begin{Definition} By a {\em group compactification} of a (discrete) group $G$ we mean a compact (Hausdorff)  group $C$ and a homomorphism from $G$ into $C$ with dense image. 
\end{Definition}

In this paper we will  always understand compactifications to mean group compactfications. So an apparently even weaker question is:
\begin{Question} Is there a pseudofinite group with a compactification which is a compact simple Lie group? 
\end{Question}

\begin{Remark} Questions 1.2 and 1.4 are equivalent, in that a positive answer to one gives a positive answer to the other. Moreover, assuming some set theory such as CH, each is equivalent to Question 1.1. 
\end{Remark} 
\begin{proof} Clearly a positive answer to Question 1.2 gives a positive answer to Question 1.4. Conversely suppose that $G$ is pseudofinite and $f$ is a homomorphism from $G$ into a compact simple Lie group $C$ such that $f(C)$ is dense in $G$. Let $M_{0}$ be the structure with universe $G$, and relations the group operation on $G$ as well as all subsets of $G$. Let $M_{0}^{*}$ be a sufficiently  saturated elementary extension of $M$ and $G^{*}$ the corresponding extension of $G$. By the proof of Proposition 3.4 of \cite{GPP}, $f$ extends to a surjective homomorphism $f^{*}$ from $G^{*}$ to $C$, so as $G^{*}$ is also a pseudofinite group we get a positive answer to Question 1.2. 
\newline
Now for the moreover clause. As an ultraproduct of finite groups is pseudofinite a positive answer to Question 1.1 gives a positive answer to Question 1.2.  Now suppose  that $G$ is a pseudofinite group with a surjective homomorphism $f$ from $G$ to a compact simple Lie group. By the argument in the first part of the proof we may assume that $G$ is $\omega_{1}$-saturated, in the group language. Under CH we find an elementary substructure $H$ of $G$ which has cardinality $\aleph_{1}$, is ($\aleph_{1}$-) saturated and such that $f|H: H \to C$ is surjective.  As $H$ is pseudofinite, $H$ is elementarily equivalent to an ultraproduct of finite groups. Again assuming CH such an ultraproduct has to be saturated of cardinality $\aleph_{1}$ hence isomorphic to $H$ and we obtain a positive answer to Question 1.1. 

\end{proof} 

The point of the above discussion is to show that  Question 1.1 is really  about compactifications  in the classical sense.
Now among compactifications of a group $G$ there will be a universal one, namely a compactification $f:G\to C$ such that for every compactification $h:G\to D$ there is a unique continuous surjection $g: C\to D$ such that $h = g\circ f$.  This universal compactification is called the {\em Bohr compactification} of $G$ and denoted $bG$.

We refer to \cite{Price} for background on the structure of compact (Lie) groups, but let us mention a few key facts we will be using:  Any (connected) compact group is an inverse limit of (connected) compact Lie groups.   The connected component $C^{0}$ of a compact group $C$  is the intersection of all open subgroups of finite index (and is of course connected). The quotient $C/C^{0}$ is profinite and is the maximal profinite quotient of $C$. When $C$ is compact Lie, $C^{0}$ has finite index in $C$. A compact Lie group $C$  is defined to be {\em semisimple} if  $C$ is connected and has no positive-dimensional closed abelian normal subgroup. Semisimplicity of the compact Lie group is equivalent to $C$ being an almost direct product of finitely many compact simple Lie groups. Any connected compact Lie group is the direct product of the connected component of its centre and a semisimple compact Lie group.

\begin{Lemma} Let $G$ be any group. Then the following are equivalent.
\newline
(i) $(bG)^{0}$ is commutative,
\newline
(ii) There is no compactification $C$ of $G$ such that $C$ is a compact Lie group and $C^{0}$ is semisimple. 
\end{Lemma}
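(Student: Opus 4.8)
The plan is to prove both implications by contraposition, using two facts throughout. First, the compactifications of $G$ are exactly the quotients of $bG$: by the universal property any compactification $h\colon G \to D$ factors as $h = g \circ f$, where $f\colon G \to bG$ is the Bohr map and $g\colon bG \to D$ is a continuous surjection, and conversely every continuous surjection $bG \to D$ yields a compactification since $f(G)$ is dense. Second, I would record the elementary fact that for a continuous surjection $g\colon A \to B$ of compact groups one has $g(A^{0}) = B^{0}$: the image $g(A^{0})$ is connected, hence lies in $B^{0}$, and it is normal in $B$, so $B/g(A^{0})$ is a Hausdorff quotient of the profinite group $A/A^{0}$ and is therefore profinite; the image of $B^{0}$ in it is connected, hence trivial, giving $B^{0} \subseteq g(A^{0})$.

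For (i)$\Rightarrow$(ii) I argue contrapositively. Suppose $C$ is a compactification of $G$ which is a compact Lie group with $C^{0}$ semisimple, and hence, being a nonempty almost direct product of compact simple Lie groups, noncommutative. The universal property gives a continuous surjection $g\colon bG \to C$, and by the fact above $g\bigl((bG)^{0}\bigr) = C^{0}$. Since a homomorphic image of a commutative group is commutative, $(bG)^{0}$ cannot be commutative, which is the negation of (i).

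For (ii)$\Rightarrow$(i), again contrapositively, I assume $(bG)^{0}$ is noncommutative and must produce a compactification that is compact Lie with semisimple connected component. Write $bG = \varprojlim_{i} C_{i}$ with each $C_{i}$ a compact Lie group and the projections $bG \to C_{i}$ surjective (Peter--Weyl). Each projection carries the connected $(bG)^{0}$ into $(C_{i})^{0}$, so $(bG)^{0}$ embeds into $\prod_{i}(C_{i})^{0}$; as $(bG)^{0}$ is noncommutative, some $(C_{i})^{0}$ must be noncommutative. Fix such an index and set $C = C_{i}$, a compact Lie compactification of $G$ with $C^{0}$ noncommutative. By the structure theorem quoted in the introduction, $C^{0} = T \times S$ with $T = Z(C^{0})^{0}$ the central torus and $S$ a semisimple compact Lie group; since $C^{0}$ is noncommutative, $S$ is nontrivial. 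Now $T$ is characteristic in $C^{0}$ and $C^{0} \trianglelefteq C$, so $T \trianglelefteq C$, and $C/T$ is again a compact Lie compactification of $G$. Applying the connected-component fact to the quotient map $C \to C/T$ gives $(C/T)^{0} = C^{0}/T \cong S$, which is semisimple, contradicting (ii).

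The main obstacle is exactly this second passage: turning a noncommutative connected component into a genuinely semisimple one on a quotient of the \emph{whole} group $bG$, rather than merely of $(bG)^{0}$. The point is that the subgroup to be killed, the central torus $T$, must be normal in all of $C$, which is why I isolate $T$ as the characteristic subgroup $Z(C^{0})^{0}$ of the normal subgroup $C^{0}$ instead of working inside $(bG)^{0}$ directly. The only convention to flag is that a semisimple compact Lie group is understood here to be a nonempty almost direct product of compact simple Lie groups, and hence noncommutative; with that reading both directions close up.
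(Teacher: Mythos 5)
Your proof is correct and follows essentially the same route as the paper's: both realize $bG$ as an inverse limit of compact Lie groups $C_i$, reduce commutativity of $(bG)^0$ to commutativity of each $(C_i)^0$, and then invoke the structure theory of connected compact Lie groups. The paper compresses the last step into the phrase ``this suffices''; your passage from a noncommutative $(C_i)^0$ to an honest compactification of $G$ with semisimple identity component, obtained by quotienting $C_i$ by the characteristic central torus $Z(C_i^0)^0$ (which is normal in all of $C_i$), is exactly the detail being elided there, and you supply it correctly.
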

\begin{proof} If $bG$ is the inverse limit of a directed system $(L_{i})_{i}$ of compact Lie groups, then clearly $(bG)^{0}$ is the inverse limit of the $L_{i}^{0}$.  So $(bG)^{0}$ is commutative iff each $L_{i}^{0}$ is commutative iff no $L_{i}^{0}$ has a semisimple image. This suffices.

\end{proof}

So the following conjecture is equivalent to a negative answer to Question 1.4 (hence also to  Questions 1.1, 1.2) , modulo passing to connected components and allowing a finite product of compact simple Lie groups in place of a single one.
\begin{Conjecture} (Tentative) If $G$ is a pseudofinite group then $(bG)^{0}$ is commutative. 
\end{Conjecture}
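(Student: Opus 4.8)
The plan is to prove the Conjecture by combining the two ingredients announced in the abstract with the structure theory of finite groups, and I expect the genuine difficulty to lie in passing from \emph{definable} compactifications to arbitrary ones.

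First, by Lemma 1.6 it suffices to rule out a compactification $f\colon G\to C$ with $C$ a compact Lie group whose connected component $C^{0}$ is semisimple. Replacing $G$ by the finite-index preimage $f^{-1}(C^{0})$ — which, after passing to a sufficiently saturated elementary extension so that it is realized as a definable finite-index subgroup, is again pseudofinite — and then using that $C^{0}$ is an almost direct product of compact simple Lie groups to project onto a single factor, I reduce to the case where $C$ is a \emph{connected compact simple Lie group} and $f(G)$ is dense in $C$. Since $C$ is linear, Peter--Weyl lets me replace $f$ by a finite-dimensional unitary representation $\rho\colon G\to U(n)$ whose image generates a dense subgroup of $C$, and the goal becomes to show that $\rho$ is trivial.

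Second, I would isolate the mechanism behind point (i) of the abstract and try to globalize it. For an ultraproduct $\prod_{i}G_{i}/U$ of finite simple groups the key fact is Gowers quasirandomness: the minimal dimension of a nontrivial complex representation of a finite simple group tends to infinity, so by \L o\'{s}'s theorem any fixed-dimensional $\rho$ must be trivial on $U$-almost all $G_{i}$, hence trivial. To handle an \emph{arbitrary} pseudofinite $G$ I would run this through the structure theory of the finite approximations $G_{i}$: via chief series and the generalized Fitting subgroup, the nonabelian chief factors are powers of finite simple groups, while the solvable radical has only abelian chief factors and, after the reduction of the first paragraph to a nonabelian perfect target $C$, can contribute nothing. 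The dichotomy I want to establish is that simple factors of \emph{bounded} order feed only the maximal profinite quotient $C/C^{0}$, whereas a nontrivial semisimple connected component $C^{0}$ forces the participation of simple factors of \emph{unbounded} rank, which are quasirandom and therefore kill the bounded-dimensional representation $\rho$. This dichotomy is essentially the content of statement (ii): the machinery of \cite{GPP} shows, for the \emph{definable} Bohr compactification, that the connected component is commutative (indeed the compactification is commutative-by-profinite).

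The main obstacle is the gap between the definable Bohr compactification and the full Bohr compactification $bG$. The logic-topology and stabilizer arguments of \cite{GPP} compute $G/G^{00}$ and thereby control those compactifications that are \emph{definable} in an ambient nonstandard model of set theory, which is exactly why (ii) is stated in that relative form. But an arbitrary continuous homomorphism $G\to C$, equivalently an arbitrary finite-dimensional unitary representation of $G$, need not be definable even with parameters, so the argument yielding (ii) does not apply verbatim to $bG$. Closing this gap requires showing that every such representation is, up to the relevant equivalence, internal or approximable — for instance via a rigidity statement to the effect that approximate representations of pseudofinite groups lie close to genuine definable ones (in the spirit of the stability of approximate homomorphisms and of Zilber's structural approximation), or via a direct algebraic argument using uniform commutator-width bounds (Nikolov--Segal, Liebeck--Shalev, and the Ore property) to forbid a dense homomorphism from a pseudofinite group onto a boundedly perfect connected compact Lie group. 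I expect this definability/internality step to be the crux, and it is what presently keeps the statement at the level of a tentative conjecture.
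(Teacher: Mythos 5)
You should note first that the statement you are proving is labelled a \emph{Conjecture} in the paper, and the paper does not prove it: it establishes only the definable analogue (Theorem 2.2, for the Bohr compactification relative to an ambient nonstandard model of set theory) and the special case of ultraproducts of finite simple groups (Theorem 3.1). Your proposal is honest about this --- you explicitly identify the passage from definable compactifications to arbitrary ones as an unclosed gap --- and that diagnosis matches the paper's own reason for calling the conjecture tentative. So as a proof the proposal is incomplete, but its assessment of where the difficulty lies is exactly right, and the opening reduction via Lemma 1.6 to excluding a compact Lie compactification with semisimple connected component is the same first step the paper takes.

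Two concrete points where your sketch diverges from, or falls short of, what the paper actually does. First, your treatment of the ultraproduct-of-finite-simple-groups case via quasirandomness and \L o\'{s} has a genuine gap: an arbitrary homomorphism $\rho\colon \prod_i G_i/U \to U(n)$ is not an ultraproduct of homomorphisms $\rho_i\colon G_i\to U(n)$, and ``$G$ has a nontrivial $n$-dimensional unitary representation'' is not a first-order property in the group language, so \L o\'{s} does not let you conclude triviality on almost all factors. This is the same internality obstruction you flag later, reappearing inside your claimed base case. The paper instead proves Theorem 3.1 by analyzing the normal subgroup lattice of $G^{*}$ (via Stolz--Thom, Thomas, and Liebeck--Shalev): every proper normal subgroup sits inside the subgroup $N$ of elements with infinitesimal normalized conjugacy-class length, $G^{*}/N$ is abstractly simple, and a bounded-index $N$ would yield a simple noncommutative definable compactification contradicting Theorem 2.2. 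Second, for the definable statement (ii) the paper does not use chief series or the generalized Fitting subgroup of the finite approximants; it quotes the Breuillard--Green--Tao theorem that a good connected Lie model of a pseudofinite (approximate) subgroup is nilpotent, and then uses that a nilpotent connected compact Lie group is commutative. Your proposed structural analysis of the $G_i$ would need to be carried out from scratch and is not obviously easier than the BGT input it would replace.
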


We say ``tentative" in Conjecture 1.7, because a positive answer to Question 1.1 is considered to be plausible, as Zilber has informed us.

\vspace{2mm}
\noindent
In Section 2 we will prove Conjecture 1.7, but  working instead with the Bohr compactification of $G$ {\em relative to}   a natural rich structure in which the pseudofinite group is definable. This is actually a very special case of one of the main theorems of \cite{BGT}  (namely, nilpotence of good connected Lie models of ultra-approximate subgroups).  Explanations are given in the next section.   In Section 3 we prove Conjecture  1.7  when $G$ is  an ultraproduct of finite simple groups. In fact in this case we show that $bG$ is trivial, and moreover $G$ will be  absolutely connected in the sense of  Gismatullin \cite{Gismatullin}.

\vspace{2mm}
\noindent
This paper is basically a write-up of a talk given at the model theory meeting in Oaxaca, Mexico, in July 2015, but including  solutions of some questions which I  posed during the talk. Thanks to  Pierre Simon, and Boris Zilber for helpful comments and suggestions. 

\section{Definable compactifications}
We first briefly recall notions from \cite{GPP}. By ``definable" in a given structure $M$ we mean definable with parameters, unless we say otherwise.
Suppose $M$ is a first order structure and $G$ is a group definable in $M$.
By a {\em definable} (with respect to the structure $M$) group compactification of $G$ we mean a homomorphism $f:G\to C$ where $C$ is a compact group, $f(G)$ is dense in $C$ and satisfying the additional property (*) that the map $f$ is ``definable":  whenever $C_{1}, C_{2}$ are disjoint closed subsets of $C$ then there is a subset $D$ of $G$, definable in $M$ such that $f^{-1}(C_{1}) \subseteq D$ and $f^{-1}(C_{2})\subseteq G\setminus D$. 

When all subsets of $G$ happen to be definable in $M$, which we call the {\em absolute case}, then condition (*) is automatically satisfied, and so a definable compactification is just a compactification.   But even in the {\em relative case} where not all subsets of $G$ need be definable in $M$ there is always a universal defiinable (in $M$) compactification of $G$ which we call the definable Bohr compactfication of $G$, denoted $def_{M}bG$. A model-theoretic description  of the definable Bohr compactification is as follows: let $M^{*}$ be a saturated elementary extension of $M$, $G^{*}$ the interpretation of the formula defining $G$  in $M^{*}$, and $(G^{*})^{00}$ the smallest subgroup of $G^{*}$ which is type-definable over $M$ (namely defined by some conjunction of formulas with parameters from $M$) and has ``bounded index" in $G$. Then $G^{*}/(G^{*})^{00}$ equipped with the ``logic topology" and with the natural homomorphism from $G$,  coincides with $def_{M}bG$.  

Alternatively one can explicitly obtain $def_{M}bG$ without specific reference to model theory, by defining it to be the completion of $G$ with respect to the topology on $G$, whose  neighbourhoods of the identity conists of those subsets $V$ of $G$ which are definable in $M$, and admit a sequence $\{V = V_{0}, V_{1}, V_{2},...\}$  of subsets of $G$ definable in $M$ such that (i) $V_{n+1}^{2}\subseteq V_{n}$ for all $n$, and (ii) each $V_{m}$ is symmetric and ``left generic" (finitely many left translates cover $G$).

\begin{Definition} By a nonstandard finite group, we mean a finite group in the sense of some elementary extension $V^{*}$ of the standard model $V$ of set theory. Namely $G$ and the graph of its group operation are elements of $V^{*}$ and $|G|\in \N^{*}$. 
\end{Definition}

So implicit in the definition above is that a nonstandard finite group $G$ comes together with the ambient structure $V^{*} = M$ in which it is obviously definable. And $def_{M}bG$ is the ``relative" compactification of $G$ that we are interested in.  

Let us note first that an ultraproduct of finite groups ``is" a nonstandard finite group in the sense above:  Suppose $G_{i}$ for $i\in \N$ are finite groups, $U$ is an ultrafilter on $\N$ and $G = \prod_{i}G_{i}/U$ is the ultraproduct. For each $i$ let $V_{i}$ be a copy of the standard model of set theory. Let $V^{*}$ be the ultraproduct $\prod_{i}V_{i}/U$, then $G$ is an element of $V^{*}$, and of course $|G|\in \N^{*}$, so we have $G$ living canonically as a finite group in the sense of this elementary extension $V^{*}$ of the standard model.  Note that any first order formula (in the language of set theory) true of each $G_{i}$ is true of $G$ in $V^{*}$. 

And of course a nonstandard finite group is a pseudofinite group, and it is also worth remarking that any saturated pseudofinite group will have the structure of a nonstandard finite group. 

In any case we prove: 
\begin{Theorem} Let $G$ be a nonstandard finite group, in the ambient structure $M = V^{*}$. Then the connected component of $def_{M}bG$ is commutative, namely Conjecture 1.7 holds in this {\em definable} context.
\end{Theorem}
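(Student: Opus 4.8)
The plan is to combine the inverse-limit description of compact groups with the nilpotence theorem for good connected Lie models of \cite{BGT}, together with the elementary fact that a connected compact nilpotent Lie group is a torus. First I would reduce to the Lie case. Since $def_{M}bG$ is a compact group it is the inverse limit of a directed system $(L_{i})_{i}$ of compact Lie groups, each $L_{i}$ a quotient of $def_{M}bG$; arguing exactly as in the proof of Lemma 1.6, $(def_{M}bG)^{0}$ is the inverse limit of the $L_{i}^{0}$, so it is commutative iff every $L_{i}^{0}$ is commutative. Moreover each composite $f_{i}:G\to def_{M}bG\to L_{i}$ is again a definable (in $M$) compactification with dense image into a compact Lie group. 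Hence it suffices to prove: for any definable compactification $f:G\to L$ with $L$ a compact Lie group, the connected component $L^{0}$ is commutative.

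Next I would recognize $f:G\to L$ as a good model of $G$ viewed as an ultra approximate group in the sense of \cite{BGT}. The relevant data is that $G$, being a nonstandard finite group in $M=V^{*}$, carries its normalized counting measure, an $M$-definable (nonstandard rational valued) left-invariant probability measure; this is precisely the measure-theoretic input underlying the approximate-groups machinery. Because $G$ is a genuine group it is trivially a ($1$-)approximate group with $\langle G\rangle = G$, and the good-model conditions — relative compactness of $f(G)$ and $f^{-1}(U)\subseteq G$ for a neighbourhood $U$ of the identity — hold automatically once the target $L$ is compact. The explicit construction of $def_{M}bG$ as the completion of $G$ with respect to the topology generated by symmetric left-generic definable sets $V_{n}$ with $V_{n+1}^{2}\subseteq V_{n}$ identifies the local structure of $L$ near the identity with the one produced by \cite{BGT}. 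Their structure theorem then applies to give that the connected component $L^{0}$ is nilpotent.

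Finally I would invoke the standard fact that a connected compact Lie group which is nilpotent is abelian. Indeed, by the facts recalled in the introduction such a group decomposes as the direct product of the connected component of its centre and a semisimple compact Lie group; a nontrivial semisimple compact Lie group is perfect and hence not nilpotent, so the semisimple factor must be trivial and $L^{0}$ is a torus. Therefore $L^{0}$ is commutative, and running this through the inverse-limit reduction yields that $(def_{M}bG)^{0}$ is commutative, which is the assertion of the theorem.

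The step I expect to be the main obstacle is the middle one: setting up the precise dictionary between the model-theoretic object $def_{M}bG = G^{*}/(G^{*})^{00}$, together with its compact Lie quotients, and the good connected Lie model produced by \cite{BGT}, and checking that the genericity/definability data used to build $def_{M}bG$ supply exactly the hypotheses required for their nilpotence theorem to apply. This is where the $M$-definability of the pseudofinite counting measure is essential, and it is precisely this feature that is unavailable in the absolute case $bG$, explaining why only the definable form of Conjecture 1.7 is obtained here.
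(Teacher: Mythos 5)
Your overall route is the same as the paper's: reduce via the inverse-limit description to a single compact Lie quotient $L$, invoke the Breuillard--Green--Tao nilpotence theorem for good models (Theorem 9.6 of \cite{BGT}), and finish with the fact that a nilpotent connected compact Lie group is a torus. The reduction in your first step and the torus argument in your last step are both fine.

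The gap is in the middle step, and it is exactly the point you flagged as the main obstacle. Theorem 9.6 of \cite{BGT} applies to a good model onto a \emph{connected} Lie group, but the Lie quotient $L$ of $def_{M}bG$ need not be connected --- indeed $G$ may well surject onto a nonabelian finite simple group, which is a compact Lie group with $L^{0}$ trivial, so no theorem can say anything about $L$ itself, and $f:G\to L$ is not a good model onto $L^{0}$ (its image is not even contained in $L^{0}$). You cannot simply assert that the structure theorem ``applies to give that the connected component $L^{0}$ is nilpotent.'' The paper's fix is short but necessary: since $L$ is Lie, $L^{0}$ is clopen of finite index, so by the definability condition (*) on the compactification $H = f^{-1}(L^{0})$ is an $M$-definable subgroup of $G$ of finite index, hence again a pseudofinite (nonstandard finite) group, and $f|H : H \to L^{0}$ \emph{is} a good model of $H$ in the sense of Definition 3.5 of \cite{BGT}; Theorem 9.6 then gives that $L^{0}$ is nilpotent, and your concluding torus argument takes over. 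With that passage to the finite-index subgroup inserted, your proof coincides with the paper's.
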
 
\begin{proof}  Let $M^{*}$ be a saturated elementary extension of $M$, let $G^{*}$ the the interpretation of $G$ in $M^{*}$ and let $f:G^{*}\to def_{M}bG$ be the canonical surjective homomorphism.  So $G^{*}$ is a nonstandard finite group in the structure $M^{*}$, and  is definable over $M$. 

Now $def_{M}bG$ is an inverse limit of compact Lie groups $L_{i}$ and we want to show that $L_{i}^{0}$ is commutative for each $i$. Fix $i$ and let $L = L_{i}$, so we have an induced homomorphism $h:G\to L$, defined over $M$.  $L^{0}$ has finite index in $L$, whereby $h^{-1}(L^{0})$ is a definable (over $M$) subgroup $H$ of $G$ of finite index, which is also clearly pseudofinite.   So $h:H\to L^{0}$ is a ``good  model of $H$" in the sense of \cite{BGT}, Definition 3.5. (Note that a pseudofinite group is a special case of a pseudofinite approximate subgroup.) 
As $L^{0}$ is connected, by Theorem 9.6 of \cite{BGT}, $L^{0}$ is nilpotent. But $L^{0}$ is a compact (rather than just locally compact) connected Lie group, so $L$ is commutative.  This completes the proof. 
\end{proof}

\section{Ultraproducts of finite simple groups} 
We work back in the ``absolute" context, and prove:
\begin{Theorem} Suppose $G$ is an ultraproduct of finite simple groups. Then $bG$ is trivial.
\end{Theorem}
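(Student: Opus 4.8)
The plan is to prove the equivalent statement that $G$ has \emph{no nontrivial finite-dimensional unitary representation}. Since $bG$ is the inverse limit of its compact Lie quotients, and by Peter--Weyl every compact group embeds into a product of unitary groups $U(n)$, the continuous finite-dimensional representations of $bG$ separate its points; by universality these correspond exactly to the finite-dimensional unitary representations $\rho\colon G\to U(n)$ of the discrete group $G$. Hence $bG$ is trivial if and only if every such $\rho$ is trivial. This single formulation subsumes both parts of the structure of a compactification: a nontrivial finite quotient $G\to F$ gives a nontrivial representation by composing with a faithful representation of $F$, and a dense homomorphism onto a nontrivial connected (semisimple) compact Lie group $L$ gives one via $L\hookrightarrow U(n)$. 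Throughout I assume $G$ is infinite (otherwise it is a single finite simple group with $bG=G$), and that ``simple'' means nonabelian simple, so that $|G_i|\to\infty$ along $U$; the abelian case $\prod_i \Z/p_i/U$ has a large Bohr compactification and must be excluded.

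The elementary engine is the rigidity of the finite simple factors. If a nonabelian finite simple group $S$ admits a nontrivial complex representation of dimension $\le n$, then by simplicity the representation is faithful, so $S\hookrightarrow U(n)$; Jordan's theorem provides an abelian normal subgroup of index $\le J(n)$, and nonabelian simplicity forces this subgroup to be trivial, whence $|S|\le J(n)$. Similarly, a nontrivial action on $\le m$ points is faithful and embeds $S$ into $\mathrm{Sym}(m)$, so $|S|\le m!$. Thus for each fixed $n$ (respectively $m$) only finitely many isomorphism types of finite simple group carry such data, and since $|G_i|\to\infty$ along $U$ the set of $i$ for which $G_i$ has a nontrivial representation of dimension $\le n$ does not belong to $U$. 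I would also record, via the Ore conjecture (Liebeck--O'Brien--Shalev--Tiep) and {\L}o\'s, that $G$ is perfect, as a consistency check and as a tool for handling the torus part of any connected Lie image.

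The main obstacle is the transfer of this rigidity from the factors to the ultraproduct. If a nontrivial $\rho\colon G\to U(n)$ were \emph{definable}, {\L}o\'s's theorem would produce nontrivial representations of dimension $\le n$ (or, in the finite-quotient case, actions on boundedly many points) for $U$-many $G_i$, contradicting the bounds above; so the entire difficulty is that an abstract representation of $\prod_i G_i/U$ need not be definable, and pseudofinite simple groups are far from NIP, so no automatic tameness lets one replace $\rho$ by definable approximations. I would resolve this by working with the model-theoretic identification from the absolute case, $bG=G^*/(G^*)^{00}$, and showing $(G^*)^{00}=G^*$, i.e.\ that $G$ has no proper bounded-index subgroup detected by the compactification. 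Concretely I would invoke that ultraproducts of finite simple groups of unbounded order are absolutely connected in the sense of Gismatullin --- the technical heart, which reduces to uniform bounded-generation statements for the $G_i$ --- and combine it with the finite-group rigidity above to force $(G^*)^{00}=G^*$; alternatively one can run the BGT good-model machinery used in Section~2 in the present absolute setting to extract definable data from a putative nontrivial compactification. I expect essentially all the work to lie in bridging this definability gap, the rigidity of the factors being easy once the gap is crossed.
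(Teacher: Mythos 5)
Your reduction is sound as far as it goes: triviality of $bG$ is indeed equivalent to every homomorphism $G\to U(n)$ being trivial, the Jordan-theorem rigidity of the finite factors is correct, and perfectness via Ore does dispose of torus quotients. You have also correctly located the crux --- an abstract homomorphism out of the ultraproduct need not be definable, so {\L}o\'s gives you nothing directly. The problem is that at exactly this point the proposal stops being a proof: your ``concrete'' resolution is to invoke absolute connectedness of ultraproducts of finite simple groups, but that statement is \emph{stronger} than the theorem you are proving (it is Proposition 3.2 of this paper, established by the same argument as Theorem 3.1), and it is not available off the shelf --- Gismatullin's results cover Chevalley-type groups over fields, not ultraproducts of alternating groups or of groups of Lie type of unbounded rank. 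The alternative of ``running the BGT machinery in the absolute setting'' also does not close the gap on its own: BGT (via Theorem 2.2) only tells you that the connected component of the \emph{definable} (relative to the nonstandard set-theoretic universe) Bohr compactification is commutative, and you still need a mechanism forcing an arbitrary compactification of $G$ to be definable in that restricted sense.

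The missing idea is the normal subgroup structure of the \emph{saturated} extension $G^{*}$ (Lemma 3.4 of the paper): the subgroup $N=\{g\in G^{*}: \ell_{c}(g)<1/n \text{ for all } n\in\N\}$ cut out by normalized conjugacy-class length is type-definable over the nonstandard model $M$ of set theory, $G^{*}/N$ is abstractly simple and noncommutative (this uses the Liebeck--Shalev uniform bound $(h^{H})^{m}=H$ for $m\geq cK$, transferred to $G^{*}$), and the normal subgroups of $G^{*}$ are linearly ordered (Thomas, Stolz--Thom, again transferred). Granting this, if $(G^{*})^{00}_{M_{0}}$ were proper it would be a proper normal subgroup of bounded index, hence contained in $N$, forcing the type-definable subgroup $N$ itself to have bounded index; then $G^{*}/N$ would be a definable-in-$M$ compactification of $G$ that is simple and noncommutative, contradicting Theorem 2.2. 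This is the bridge across the definability gap: one does not make the arbitrary representation definable, one shows that every proper bounded-index subgroup of $G^{*}$ sits below a canonical type-definable one. Your instinct that the work ``reduces to uniform bounded-generation statements for the $G_{i}$'' is exactly right, but those statements (and their transfer to $G^{*}$ rather than just $G$) have to be supplied, not presumed.
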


Note that this means the following: if $M_{0}^{*}$ is a saturated elementary extension of the structure $M_{0}$ which consists of the group $G$ with predicates for {\em all} subsets, and $G^{*}$ is the corresponding extension of $G$, then $G^{*} = (G^{*})^{00}_{M_0}$.  Now $(G^{*})^{000}$ is defined to be the smallest subgroup of $G^{*}$ which has bounded index in $G^{*}$ and is $Aut(M_{0}^{*}/M_{0})$-invariant.
Our  methods will also yield (with this notation):
\begin{Proposition} $G^{*} = (G^{*})^{000}_{M_{0}}$.  So  $G$ is {\em absolutely connected} in the sense of \cite{Gismatullin}. 
\end{Proposition}

The proof of Theorem 3.1 makes use of  results on the normal subgroup structure of ultraproducts of finite simple groups   \cite{Thomas} and  \cite{Stolz-Thom}, some of which depend on the work of Liebeck and Shalev \cite{Liebeck-Shalev}.

\begin{Definition} Let $G$ be a nonstandard finite group (living in an ambient nonstandard model $M = V^{*}$ of set theory as in Section 2). For $g\in G$, $\ell_{c} (g) = log|g^{G}|/log|G|$, where $g^{G}$ denotes the conjugacy class of $g$.  So $\ell_{c}(g)$ is in the unit interval of $\R^{*}$. 
\end{Definition}

\begin{Lemma} Let $G$ be an ultraproduct of finite simple groups, considered (as above) as a nonstandard finite group in the structure $M = V^{*}$.  Let $M^{*}$ be an elementary  extension of $M$, and $G^{*}$ the corresponding extension of $G$.  
\newline
(i)  Let $N = \{g\in G^{*}: \ell_{c}(g) < 1/n: n\in \N\}$. Then $N$ is a proper normal  subgroup of $G^{*}$ and  $G^{*}/N$ is simple  (and noncommutatve) as an abstract group.
\newline
(ii) The family  of normal subgroups of $G^{*}$ is linearly ordered by inclusion. 
\end{Lemma}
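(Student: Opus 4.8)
The plan is to treat $\ell_{c}$ as a logarithmic norm on $G^{*}$ and reduce both clauses to covering and generation estimates for conjugacy classes in finite simple groups, transferred to the internal simple group $G^{*}$; throughout I use that nonabelian finite simple groups are perfect and centreless, so by transfer $[G^{*},G^{*}]=G^{*}$ and any $1\ne g\in G^{*}$ has $\ell_{c}(g)>0$. First I would record the elementary properties of $\ell_{c}$. Since inversion carries $g^{G^{*}}$ bijectively onto $(g^{-1})^{G^{*}}$, we have $\ell_{c}(g^{-1})=\ell_{c}(g)$; and since $(gh)^{x}=g^{x}h^{x}$ yields $(gh)^{G^{*}}\subseteq g^{G^{*}}h^{G^{*}}$, we get $|(gh)^{G^{*}}|\le|g^{G^{*}}|\,|h^{G^{*}}|$ and hence the subadditivity $\ell_{c}(gh)\le\ell_{c}(g)+\ell_{c}(h)$. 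As $\ell_{c}$ is constant on conjugacy classes and being infinitesimal is preserved under inverses and, by subadditivity, under products, $N$ is a normal subgroup. It is proper because every finite simple group has a conjugacy class of size at least $|G|^{\delta}$ for a fixed standard $\delta>0$ (for instance via $\max_{C}|C|\ge|G|/k(G)$ together with a uniform bound on $k(G)$, as used in \cite{Liebeck-Shalev,Stolz-Thom}); transfer then yields $g\in G^{*}$ with $\ell_{c}(g)\ge\delta$, so $g\notin N$.

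For clause (i) it remains to see that $N$ is maximal among normal subgroups. If $g\notin N$ then $\ell_{c}(g)\ge 1/n$ for some standard $n$, so $\log|g^{G^{*}}|\ge(1/n)\log|G^{*}|$. The Liebeck--Shalev covering estimate, that there is a standard $c$ with $(g^{G})^{m}=G$ whenever $m\log|g^{G}|\ge c\log|G|$, applies internally in $M^{*}$ and gives $(g^{G^{*}})^{m}=G^{*}$ for the standard integer $m=\lceil cn\rceil$; thus the normal closure of any $g\notin N$ is all of $G^{*}$. Consequently any normal subgroup properly containing $N$ equals $G^{*}$, so $G^{*}/N$ is simple; and it is noncommutative since $[G^{*},G^{*}]=G^{*}\not\subseteq N$.

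The crux of clause (ii) is a two-sided generation lemma: for nontrivial $g,h\in G^{*}$, the element $g$ lies in the normal closure of $h$ if and only if the ratio $\ell_{c}(g)/\ell_{c}(h)$ is finite (bounded by a standard integer). The ``only if'' direction is immediate, since membership means $g\in(h^{G^{*}}\cup(h^{-1})^{G^{*}})^{k}$ for a standard $k$, whence $\ell_{c}(g)\le k\,\ell_{c}(h)$ by subadditivity. The ``if'' direction is the substantive input and is where the results of \cite{Liebeck-Shalev} (as exploited in \cite{Stolz-Thom,Thomas}) are needed: one requires that in a finite simple group an element of one class is a product of boundedly many conjugates of any class of comparable logarithmic size, uniformly in the group, and then transfers this to $G^{*}$. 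I expect this to be the main obstacle, precisely because it must hold when $\ell_{c}(g)$ and $\ell_{c}(h)$ are \emph{both} infinitesimal; there the crude covering bound $m\approx c/\ell_{c}(h)$ is infinite and useless, so one must invoke the finer Liebeck--Shalev comparison of two conjugacy classes rather than mere covering of $G^{*}$.

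Granting the generation lemma, clause (ii) follows quickly. Let $H_{1},H_{2}$ be normal subgroups with $H_{1}\not\subseteq H_{2}$, and fix $g\in H_{1}\setminus H_{2}$; then $g\ne 1$, so $\ell_{c}(g)>0$. For any nontrivial $h\in H_{2}$ the ratio $\ell_{c}(g)/\ell_{c}(h)$ cannot be finite, for otherwise the generation lemma would place $g$ in the normal closure of $h$ and hence in $H_{2}$, a contradiction. Therefore $\ell_{c}(g)/\ell_{c}(h)$ is infinite, so $\ell_{c}(h)/\ell_{c}(g)$ is infinitesimal and in particular finite, and the generation lemma gives $h\in H_{1}$. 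Since $1\in H_{1}$ as well, we conclude $H_{2}\subseteq H_{1}$, so the normal subgroups of $G^{*}$ form a chain.
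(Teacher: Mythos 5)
Your proposal is sound in outline and, for part (i), follows essentially the same path as the paper: conjugation-invariance plus subadditivity of $\ell_{c}$ give normality of $N$ (you are actually more explicit than the paper here, which only mentions conjugation-invariance), and the Liebeck--Shalev covering bound, transferred to $G^{*}$, shows that any $g\notin N$ normally generates $G^{*}$. For part (ii) you genuinely diverge: the paper splits into the alternating case (where it uses the support function $s(g)$ and Proposition 2.4 / Lemma 2.7 of \cite{Thomas}) and the Lie-type case (adapting Lemma 3.12 of \cite{Stolz-Thom}), whereas you work uniformly with a single ``two-sided generation lemma'': $g$ lies in the normal closure of $h$ iff $\ell_{c}(g)/\ell_{c}(h)$ is bounded by a standard integer. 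This is a cleaner formulation and is in fact the main two-class generation theorem of \cite{Stolz-Thom}, so deferring to the literature for it is no worse than what the paper does; but you should cite that precise statement rather than leaving it as an ``expected obstacle,'' since as you correctly note the crude covering bound is useless when both $\ell_{c}(g)$ and $\ell_{c}(h)$ are infinitesimal, and the whole content of (ii) lives in that regime. Your derivation of linearity from the generation lemma (if $g\in H_{1}\setminus H_{2}$ then $\ell_{c}(h)/\ell_{c}(g)$ is infinitesimal for every nontrivial $h\in H_{2}$, hence $H_{2}$ lies in the normal closure of $g$) is correct.

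Two small inaccuracies to repair. First, your justification of properness of $N$ invokes ``a uniform bound on $k(G)$'': there is no uniform bound on the number of conjugacy classes of finite simple groups ($k(A_{n})$ is the number of partition classes and tends to infinity). What is true, and suffices, is that $k(G)\le |G|^{o(1)}$, so every sufficiently large finite simple group has a conjugacy class of size at least $|G|^{1/2}$, say; transfer of that first-order statement gives an element with $\ell_{c}(g)\ge 1/2$. Second, ``$[G^{*},G^{*}]=G^{*}$ by transfer'' requires a first-order statement, i.e.\ uniformly bounded commutator width in finite simple groups (which holds, indeed with width one by the Ore conjecture theorem); perfectness alone is not first-order, so say which bounded-width statement you are transferring.
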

\begin{proof}  Note that $G^{*}$ is a nonstandard finite group.  Now when $G = G^{*}$ then both (i) and (ii) are contained in \cite{Stolz-Thom} and \cite{Thomas}.  So it is just a question of passing from $G$ to $G^{*}$. This follows by inspection of the proofs in the above references, and we say a few words.
\newline
(i) is precisely as in  Proposition 3.1 of \cite{Stolz-Thom}:  First as $\ell_{c}$ is (by transfer)  invariant under conjugation, $N$ is a normal subgroup of $G^{*}$.  Now if $\ell_c(g)\geq \epsilon$ for some positive (standard)  real $\epsilon$, then  $|log|G^{*}|/|log(g^{G^{*}})| \leq K = 1/\epsilon$. But by Theorem 1.1 of \cite{Liebeck-Shalev}, there is a constant $c$ for every finite simple group $H$, $\forall h\in H$ $log|H|/\log|h^{H}| \leq k$ implies that for any integer $m\geq cK$, $(h^{H})^{m} = H$.   The same is therefore true of the ultraproduct of finite simple groups $G$ in $M$, so also of $G^{*}$ in $M^{*}$, whereby $(g^{G^{*}})^{m} = G^{*}$. This shows that $G/N$ is simple (and clearly noncommutative). 
\newline
(ii) We separate into cases according to whether $G$ is an ultraproduct of alternating groups, or an ultraproduct of finite simple groups of Lie type. In the first case, $G^{*}$ is a  nonstandard finite alternating group, and  for $g\in G^{*}$ we can consider $s(g)$ which is by definition the cardinality of the support of $g$.  Now Proposition 2.4 of \cite{Thomas} says that for $g,h$ nonidentity elements of $G$, $g$ is in the normal subgroup generated by $h$ iff $s(g)/s(h)$ is finite (i.e. $< r$ for some standard positive real number $r$). The nontrivial statement is right to left. This transfers from $G$ to $G^{*}$ in the following way (or alternatively the  proof simply works for $G^{*}$): The proof of Lemma 2.7 of \cite{Thomas} gives that that if $g,h\in G$ and $s(g)/s(h) \leq k$ where $k\geq 2$ is a given  integer, then $g$ is a product of $4k$ conjugates of $h$. This transfers to $G^{*}$ (for a given integer $k$), whereby we see that for $g, h\in G^{*}$, $g$ is in the normal subgroup generated by $h$ iff $s(g)/s(h)$ is finite. This implies that the family of normal subgroups of $G^{*}$ is linearly ordered. 
\newline
In the case where $G$ is an ultraproduct of finite simple groups of Lie type, we can adapt the proof of Lemma 3.12 in \cite{Stolz-Thom} in a similar fashion. 
\end{proof} 

\vspace{5mm}
\noindent
{\em Proof of Theorem 3.1.}  
$G$ is our ultraproduct of finite simple groups. Let, as before,  $M_{0}$ be the structure consisting of $G$, its group structure and predicates for all subsets of $G$,  let $M_{0}^{*}$ be a saturated elementary extension of $M_{0}$, let $G^{*}$ be the corresponding extension of $G$,  and denote by $N_{1}$ the smallest type-definable over $M_{0}$ subgroup of $G^{*}$ of bounded index.  We must show that $N_{1} = G^{*}$. Now by saturation we may identify $G^{*}$ (as a group extending $G$) with the interpretation of the formula defining $G$ in an elementary extension $M^{*}$ of the nonstandard model $M$ of set theory in which $G$, as a nonstandard finite group lives. Now assuming that $N_{1}\neq G^{*}$, $N_{1}$ would be a proper normal subgroup of $G^{*}$, which is therefore contained in the $N$ from part (i) of Lemma 3.4. As $N_{1}$ has bounded index in $G^{*}$, $N$ also has bounded index in $G^{*}$.
Now $N$ is type-definable over $M$ in the structure $M^{*}$, whereby $G^{*}/N$ is a definable in $M$ compactification of $G$. But $G^{*}/N$ is simple (noncommutative), which contradicts Theorem 2.2.
(Actually the appeal to Theorem 2.2 should not really be necessary as one should be able to see directly that $N$ does not have ``bounded index"  in $G^{*}$.)

\vspace{5mm}
\noindent
{\em Proof of Proposition 3.2.} With notation from the proof above, suppose that $N_{2}$ were a proper $Aut(M_{0}^{*}/M_{0})$-invariant normal subgroup of $G^{*}$ of bounded index in $G^{*}$. Then $N$ would be of bounded index in $G^{*}$ again yielding a contradiction. 

\vspace{5mm}
\noindent
\begin{Remark}  Routine model-theoretic arguments allow us to conclude that if  $G$, as a group, is a model of the theory of finite simple groups, then $bG$ is trivial. Analogously for the conclusion of 3.2.
\end{Remark}

\end{document}